\newtheorem{theo}{Theorem}[section]
\newtheorem{coro}{Corollary}[section]
\newtheorem{claim}{Claim}
\theoremstyle{remark}  
\newtheorem{rema}{\bf Remark}[section]
\begin{document}

\title{On $p$-gonal fields of definition}
\author{Ruben A. Hidalgo}

\subjclass[2000]{30F10, 30F20, 14H37, 14H55}
\keywords{Riemann surfaces,  $p$-gonal curves, Automorphisms}
\thanks{Partially supported by projects Fondecyt 1190001 and 1220261}

\address{Departamento de Matem\'atica y Estad\'{\i}stica, Universidad de La Frontera. Temuco, Chile}
\email{ruben.hidalgo@ufrontera.cl}

\begin{abstract} 
Let $S$ be a closed Riemann surface of genus $g \geq 2$ and $\varphi$ be a conformal automorphism of $S$ of prime order $p$ such that $S/\langle \varphi \rangle$ has genus zero. 
Let ${\mathbb K} \leq {\mathbb C}$ be a field of definition of $S$.
We provide an argument for the existence of a field extension ${\mathbb F}$ of ${\mathbb K}$, of degree at most $2(p-1)$, for which $S$ is definable by a curve of the form $y^{p}=F(x) \in {\mathbb F}[x]$, in which case $\varphi$ corresponds to $(x,y) \mapsto (x,e^{2 \pi i/p} y)$. 
If, moreover, $\varphi$ is also definable over ${\mathbb K}$, then ${\mathbb F}$ can be chosen to be at most a quadratic extension of ${\mathbb K}$. 
For $p=2$, that is when $S$ is hyperelliptic and $\varphi$ is its hyperelliptic involution, this fact is  due to Mestre (for even genus) and Huggins and Lercier-Ritzenthaler-Sijslingit in the case that ${\rm Aut}(S)/\varphi\rangle$ is non-trivial. 
\end{abstract}

\maketitle

\section{Introduction}
In \cite{Schwarz}, H. A. Schwarz proved that the group ${\rm Aut}(S)$ of conformal automorphisms of a closed Riemann surface 
$S$ of genus $g \geq 2$ is finite. Later, in \cite{Hurwitz}, A. Hurwitz obtained the upper bound  $|{\rm Aut}(S)| \leq 84(g-1)$ (this is known as the Hurwitz's bound).  

Let $p \geq 2$ be a prime integer. We say that a closed Riemann surface $S$ is {\it cyclic $p$-gonal} if there exists $\varphi \in {\rm Aut}(S)$ of order $p$ such that the quotient orbifold $S/\langle \varphi \rangle$ has genus zero. In this case, $\varphi$ is called a {\it $p$-gonal automorphism} and the cyclic group $\langle \varphi \rangle$ a {\it $p$-gonal group} of $S$. The case $p=2$ corresponds to $S$ being hyperelliptic and $\varphi$ its (unique) hyperelliptic involution. 
The case $p=3$  was studied by R.D.M. Accola in \cite{Accola}. In \cite{Gabino} G. Gonz\'alez-Diez proved that $p$-gonal groups are unique up to conjugation in  ${\rm Aut}(S)$ (if $p \geq 5n-7$, where $n \geq 3$ is the number of fixed points of $\varphi$, then $\langle \varphi \rangle$ is the unique $p$-group in ${\rm Aut}(S)$ \cite{H:pgrupo}). Results concerning automorphisms of $p$-gonal Riemann surfaces can be found, for instance, in \cite{BCI2, BW, BCG,  CI3, GWW, W2}.

As a consequence of the Riemann-Roch theorem, a closed Riemann surface $S$ can be described by an (either affine or projective) irreducible complex algebraic curve (i.e., after normalization of the curve if necessary, it carries a Riemann surface structure which is biholomorphic to that of $S$). For instance, Equation \eqref{eq1} (respectively, Equation \eqref{eq2}) given below provides an affine (respectively, a projective) irreducible algebraic curve $E$ (respectively, $\widehat{E}$) representing a cyclic $p$-gonal surface (the affine model is smooth, but the projective one has a singularity at the point $[0:1:0]$).

Let ${\mathbb K}$ be a subfield of the field ${\mathbb C}$ of complex numbers. 
If one may find an irreducible algebraic curve representing $S$, which is defined as the common zeroes of some polynomials with coefficients in ${\mathbb K}$, then we say that $S$ is {\it definable} over ${\mathbb K}$ (and that ${\mathbb K}$ is a {\it field of definition} of $S$). The intersection of all the fields of definition of $S$, called the {\it field of moduli} of $S$ (see Section \ref{Sec:FOM}), is not in general a field of definition.

If we are given $G< {\rm Aut}(S)$ and the geometrical structure of the quotient orbifold $S/G$, then it is not a simple task to find an algebraic curve for $S$ reflecting the action of $G$. A family of surfaces for which this is well known is the case of cyclic $p$-gonal Riemann surfaces as we proceed to recall below.

Let $S$ be a $p$-gonal Riemann surface, $\varphi \in {\rm Aut}(S)$ be a $p$-gonal automorphism and $\pi:S \to \widehat{\mathbb C}$ be 
a regular branched cover with $\langle \varphi \rangle$ its deck group. Let $\{a_{1},\ldots, a_{m}\} \subset \widehat{\mathbb C}$ be the set of branch values of $\pi$. If  $a_{j} \neq \infty$, for every $j=1,\ldots, m$, then 
there exist integers $n_{1},...,n_{m} \in \{1,...,p-1\}$, $n_{1}+\cdots+n_{m} \equiv 0 \; {\rm mod \;} p$, such that $S$ is defined by the affine, irreducible and smooth {\it $p$-gonal curve} with equation 
\begin{equation}\label{eq1}
E: \; y^{p}=F(x)=\prod_{j=1}^{m} (x-a_{j})^{n_{j}} \in {\mathbb C}[x].
\end{equation} 

If one of the branch values is equal to $\infty$, say $a_{m}=\infty$, then in \eqref{eq1} we delete the corresponding factor $(x-a_{m})^{n_{m}}$ and assume $n_{1}+\cdots+n_{m-1} \nequiv 0 \; {\rm mod \;} p$.  In the hyperelliptic case, i.e., $p=2$, in the above one has $m \in \{2g+1,2g+2\}$ and $n_{j}=1$.
In the above affine algebraic model, $\pi(x,y)=x$ and $\varphi(x,y)=(x,\omega_{p} y)$, where $\omega_{p}=e^{2 \pi i/p}$.

\begin{rema}
An irreducible projective algebraic curve is obtained from the above affine one as
\begin{equation}\label{eq2}
\widehat{E}: \; y^{p}z^{n_{1}+\cdots+n_{m}-p}=\prod_{j=1}^{m} (x-a_{j}z)^{n_{j}}.
\end{equation} 

Note that the projective curve $\widehat{E}$ is not smooth at the point $[0:1:0]$. After normalization of the curve, one obtains a closed Riemann surface which is biholomorphic to $S$. In this case, $\pi([x:y:z])=x/z$ and 
$\varphi([x:y:z])=[x:\omega_{p} y:z]$.
\end{rema}

If ${\mathbb F}$ is a subfield of ${\mathbb C}$ such that in \eqref{eq1} we have $F(x) \in {\mathbb F}[x]$,  then we say that ${\mathbb F}$ is a {\it $p$-gonal field of definition} of $S$ (and that $S$ is {\it cyclically $p$-gonally defined over ${\mathbb F}$}). 
Note that there are infinitely many different $p$-gonal fields of definition for $S$ (a, for instance, if $T$ is a M\"obius transformation, then we may replace the values $a_{j}$ by $T(a_{j})$).

Given a field of definition of a $p$-gonal Riemann surface $S$, it is not clear at a first sight if it is a $p$-gonal field of definition. 
Also, it might be that a minimal $p$-gonal field of definition is not a minimal field of definition (see the exceptional case $(m,p)=(4,3)$ in Section \ref{auto}).  The aim of this paper is to provide an argument to show that, given any field of definition ${\mathbb K}$ of $S$, there is a $p$-gonal field of definition ${\mathbb F}$ which is an extension of degree at most $2(p-1)$ over ${\mathbb K}$.

If $\varphi$ is an automorphism of $S$, then we say that $S$ and $\varphi$ are simultaneously defined over ${\mathbb K}$ if there is an algebraic curve model of $S$, defined over ${\mathbb K}$, such that $\varphi$ is given by a rational map on it with coefficients in ${\mathbb K}$.

\begin{theo}\label{teo1} Let $S$ be a cyclic $p$-gonal Riemann surface of genus $g \geq 2$, with a $p$-gonal automorphism $\varphi$, and let ${\mathbb K}$ be a field of definition of $S$. Then 
\begin{enumerate}[leftmargin=*,align=left]
\item There is $p$-gonal field of definition of $S$, this being an extension of degree at most $2(p-1)$ of ${\mathbb K}$ (which is also a field of definition of $\varphi$).
\item If both $S$ and $\varphi$ are simultaneously defined over ${\mathbb K}$, then there is a $p$-gonal field of definition of $S$, this being an extension of degree at most two of ${\mathbb K}$.

\item If in equation \eqref{eq1} $n_{1}=\cdots=n_{m}$, then there is a $p$-gonal field of definition of $S$, this being an extension of degree at most two of ${\mathbb K}$.
\end{enumerate}
\end{theo}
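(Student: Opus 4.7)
My plan is to realize $S$ as a cyclic degree-$p$ cover $\pi\colon S\to C=S/\langle\varphi\rangle$ of a genus-zero curve, and to descend this cover together with its defining equation to a controlled extension of $\mathbb{K}$. I will first carry out Part~(2) as the core construction, then reduce Part~(1) to Part~(2) by an auxiliary extension that makes $\varphi$ itself rational, and finally treat Part~(3) by a simplifying substitution.

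\textbf{Part~(2).} Under the hypothesis that $S$ and $\varphi$ are simultaneously defined over $\mathbb{K}$, the quotient $C=S/\langle\varphi\rangle$ is a smooth projective genus-zero curve over $\mathbb{K}$, i.e., a conic. Every such conic acquires a rational point, and hence becomes $\mathbb{P}^1$, after at most a quadratic extension $\mathbb{F}/\mathbb{K}$. I will identify $C_\mathbb{F}\cong\mathbb{P}^1_\mathbb{F}$ by an $\mathbb{F}$-rational coordinate $x$ with $\infty$ unramified. The branch divisor $D=\sum n_j(a_j)$ of $\pi$ is then $\mathbb{F}$-rational, so $F(x)=\prod(x-a_j)^{n_j}\in\mathbb{F}[x]$. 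Since $\pi^{\ast}D=p\cdot E$ for some $\mathbb{F}$-rational divisor $E$ on $S_\mathbb{F}$, the function $\pi^{\ast}F$ is a $p$-th power in $\overline{\mathbb{F}}(S)^\times$; a Galois-descent argument then produces $y\in\mathbb{F}(S)$ with $y^p=cF(x)$ for some $c\in\mathbb{F}^\times/(\mathbb{F}^\times)^p$, and absorbing $c$ into $F$ gives the $p$-gonal equation $y^p=F(x)\in\mathbb{F}[x]$.

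\textbf{Parts~(1) and~(3).} For Part~(1), Gonz\'alez-Diez's theorem implies that the $p$-gonal subgroups of $\mathrm{Aut}(S)$ form a single conjugacy class, so the Galois orbit of $\langle\varphi\rangle$ lies within it; by choosing a suitable $\mathrm{Aut}(S)$-twist of the $\mathbb{K}$-model of $S$ (i.e., a different $\mathbb{K}$-rational structure on the same Riemann surface) I may arrange that $\langle\varphi\rangle$ itself is $G_\mathbb{K}$-stable. The residual action of $G_\mathbb{K}$ on $\langle\varphi\rangle\cong\mathbb{Z}/p$ is then captured by a character $\chi\colon G_\mathbb{K}\to(\mathbb{Z}/p)^\times$, whose kernel fixes an extension $\mathbb{K}_1/\mathbb{K}$ of degree dividing $p-1$. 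Over $\mathbb{K}_1$ the element $\varphi$ is rational on this model, and applying Part~(2) over $\mathbb{K}_1$ yields a $p$-gonal field $\mathbb{F}$ with $[\mathbb{F}:\mathbb{K}_1]\leq 2$, hence $[\mathbb{F}:\mathbb{K}]\leq 2(p-1)$. For Part~(3), with $n_1=\cdots=n_m=n$, I will write $F=G^n$ for $G(x)=\prod_j(x-a_j)$ and apply the substitution $y\mapsto y^\alpha G^{-\beta}$ with $\alpha n\equiv 1\pmod{p}$ and $\beta=(\alpha n-1)/p$; this transforms the equation into $y^p=G(x)$, reducing to the case where all multiplicities equal~$1$. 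In this reduced form, $G$ depends only on the unordered branch set, which is determined by $S$ and rational over the same quadratic extension needed to identify $C$ with $\mathbb{P}^1$, giving the degree-$2$ bound.

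\textbf{Main obstacle.} I expect the technically hardest step to be the $p$-th root descent in Part~(2) when $\omega_p\notin\mathbb{F}$: the extension $\mathbb{F}(S)/\mathbb{F}(x)$ is then not a Kummer extension, so $y$ cannot be extracted from a Kummer generator. The appropriate framework is that of $\mu_p$-torsors, in which the twist class of the cyclic cover lies in $H^1(G_\mathbb{F},\mu_p)\cong\mathbb{F}^\times/(\mathbb{F}^\times)^p$. The crucial technical point is that this twist can always be absorbed by replacing $F$ with $cF$ for a suitable $c\in\mathbb{F}^\times$, so no further extension of $\mathbb{F}$ is required; this is exactly what keeps the Part~(2) bound at $2$ rather than $2(p-1)$.
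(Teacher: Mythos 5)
Your treatment of the case where $\langle \varphi \rangle$ is Galois-stable tracks the paper closely: the paper also first passes to the fixed field ${\mathbb K}_{1}$ of the stabilizer of $\varphi$ (degree at most $p-1$), descends the genus-zero quotient via Weil's cocycle criterion to a curve $B$ over ${\mathbb K}_{1}$, and then uses the branch divisor plus a multiple of the canonical divisor to produce an effective ${\mathbb K}_{1}$-rational divisor of degree at most $2$, hence a point over a quadratic extension and an isomorphism $B \cong {\mathbb P}^{1}$ there; your ``conic acquires a point after a quadratic extension'' is the same step in different clothing. One remark: the $\mu_{p}$-torsor descent of $y$ that you flag as the main obstacle is not actually needed for the statement as formulated. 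Once the branch divisor $\sum n_{j}(a_{j})$ is known to be rational over ${\mathbb F}$ (the paper's Claim~2), the curve $y^{p}=\prod(x-a_{j})^{n_{j}}$ is a model defined over ${\mathbb F}$ that is ${\mathbb C}$-isomorphic to $S$, because a cyclic $p$-cover of ${\mathbb P}^{1}$ is determined up to isomorphism by its branch points and rotation numbers. You are solving the harder problem of descending a \emph{given} ${\mathbb F}$-model to that equation, which the theorem does not ask for.

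The genuine gap is in your reduction for Part~(1) when $\langle \varphi \rangle$ is \emph{not} the unique $p$-gonal group (equivalently, when it is not normal in ${\rm Aut}(S)$, so that $\langle\varphi\rangle^{\sigma}$ need not equal $\langle\varphi\rangle$). You assert that, since all $p$-gonal subgroups are conjugate, ``a suitable ${\rm Aut}(S)$-twist'' of the ${\mathbb K}$-model makes $\langle\varphi\rangle$ Galois-stable. This does not follow: writing $\langle\varphi\rangle^{\sigma}=h_{\sigma}\langle\varphi\rangle h_{\sigma}^{-1}$ only determines $h_{\sigma}$ modulo the normalizer of $\langle\varphi\rangle$, and to twist you need an honest $1$-cocycle $\sigma\mapsto c_{\sigma}$ with values in ${\rm Aut}(S_{\overline{\mathbb K}})$ lifting these cosets; the existence of such a lift is an obstruction problem you have not addressed, and a naive alternative (passing to the field fixing the stabilizer of $\langle\varphi\rangle$ in the Galois action on the set of $p$-gonal subgroups) can cost more than $2(p-1)$ --- e.g.\ the Klein quartic has eight $7$-gonal subgroups. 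The paper avoids this entirely: by Wootton's theorem the non-uniqueness occurs only for the tuples $(m,p)\in\{(3,7),(4,3),(4,5),(5,3),(p,p),(2p,p)\}$, and for each of these it exhibits an explicit $p$-gonal model over the field of moduli or a quadratic extension of it. Your argument needs either a proof that the required twist exists or this finite case-by-case analysis; note that the exceptional case $(m,p)=(4,3)$ shows the phenomenon is not vacuous, since there the minimal $p$-gonal field of definition genuinely exceeds the minimal field of definition. Parts~(2) and~(3), and Part~(1) in the unique case, are fine and essentially the paper's argument.
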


\begin{rema}
Theorem \ref{teo1} still valid if we change ${\mathbb C}$ to any algebraically closed field, where in positive characteristic we need to assume that $p$ is different from the characteristic.
\end{rema}

\begin{rema}
For each integer $n \geq 2$, not necessarily prime, the definition of cyclic $n$-gonal Riemann surface $S$, $n$-gonal automorphism $\varphi$ and $n$-gonal group $\langle \varphi \rangle$ is the same as for the prime situation. In the particular case that every fixed point of a non-trivial power $\varphi^{k}$ is also a fixed point of $\varphi$, the 
definition of an $n$-gonal curve is the same as in \eqref{eq1}, but replacing $p$ by $n$ and assuming each the exponent $n_{j}$ to be relatively prime to $n$. In this case, 
under the assumption that $S$ has a unique $n$-gonal group $\langle \varphi \rangle$ (this is the situation for generalized superelliptic Riemann surfaces \cite{HQS}), then the arguments of the proof of Theorem \ref{teo1} allows us to obtain that: if ${\mathbb K}$ is a field of definition of $S$, then there is an $n$-gonal field of definition of $S$, this being an extension of degree at most $2\phi(n)$ of ${\mathbb K}$, where $\phi(n)$ is the $\phi$-Euler function.
\end{rema}

 \subsection{An application to hyperelliptic surfaces}
 Let $S$ be a hyperelliptic Riemann surface (i.e., $p=2$) with hyperelliptic involution $\varphi$ and let ${\mathbb K}$ be a field of definition of $S$. As $\varphi$ is unique, one may consider the group ${\rm Aut}_{red}(S):={\rm Aut}(S)/\langle \varphi \rangle$, called the reduced group of automorphisms of $S$.

For even genus, in \cite{Mestre},  J-F. Mestre proved that $S$ is also hyperelliptically definable over ${\mathbb K}$. If the genus is odd, then the previous fact is in general false; as can be seen from examples in \cite{Fuertes,FG,LR,LRS}. In \cite{Huggins}, B. Huggings  proved that if ${\rm Aut}_{red}(S)$ is neither trivial or cyclic, then $S$ is also hyperelliptically definable over ${\mathbb K}$. In \cite{LRS},
R. Lercier, C. Ritzenthaler and J. Sijslingit proved that $S$ can be hyperelliptically defined over a quadratic extension of ${\mathbb K}$ if the reduced group is a non-trivial cyclic group. Our theorem asserts that this fact still valid even if the reduced group is trivial.

\begin{coro}\label{coro1}
If ${\mathbb K}$ is a field of definition of a hyperelliptic Riemann surface, then it is  hyperelliptically definable over an extension of degree at most two of ${\mathbb K}$.
\end{coro}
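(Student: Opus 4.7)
The plan is to derive Corollary \ref{coro1} as an immediate specialization of Theorem \ref{teo1} with $p = 2$. In fact, all three parts of Theorem \ref{teo1} independently yield the claimed bound, so the derivation is extremely short.

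The most direct route is part (1) of Theorem \ref{teo1}: with $p = 2$, the general bound $2(p-1)$ equals $2$, and the theorem furnishes a $p$-gonal (equivalently, hyperelliptic) field of definition $\mathbb{F}$ with $[\mathbb{F}:\mathbb{K}] \leq 2$. Alternatively, one may invoke part (2): for $g \geq 2$ the hyperelliptic involution $\varphi$ is unique in $\mathrm{Aut}(S)$, hence it is preserved by every Galois action fixing (the isomorphism class of) $S$. Consequently, if $S$ is defined over $\mathbb{K}$, then $\varphi$ is automatically defined over $\mathbb{K}$ as well, and part (2) applies. A third route is part (3), since in the hyperelliptic normal form $y^{2} = \prod_{j=1}^{m}(x-a_{j})$ one has $n_{1} = \cdots = n_{m} = 1$, so the hypothesis of part (3) is trivially met.

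Since the corollary reduces to Theorem \ref{teo1} in three distinct ways, there is no real obstacle to overcome. The only point worth emphasizing is that the statement genuinely sharpens the classical results of Mestre, Huggins and Lercier--Ritzenthaler--Sijsling, which treat respectively even genus or nontrivial (resp.\ nontrivial non-cyclic) reduced automorphism group; Corollary \ref{coro1} unifies these into a single bound of degree at most two valid \emph{without any assumption} on $\mathrm{Aut}_{red}(S)$.
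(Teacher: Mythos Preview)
Your proposal is correct and matches the paper's approach: the paper presents Corollary~\ref{coro1} without a separate proof, simply as the specialization of Theorem~\ref{teo1} to $p=2$ (the preceding paragraph says ``Our theorem asserts that this fact still valid even if the reduced group is trivial''). Your observation that all three parts of Theorem~\ref{teo1} independently yield the bound is accurate and slightly more explicit than the paper, but the derivation is the same.
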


\section{An application to fields of moduli}\label{Sec:FOM}
Let $S$ be a closed Riemann surface and let $C$ be an irreducible algebraic curve representing it. The {\it field of moduli} ${\mathcal M}_{S}$ of $S$ is the fixed field of the group $\Gamma_{C}=\{\sigma \in {\rm Aut}({\mathbb C}/{\mathbb Q}): C^{\sigma} \cong C\}$; this field does not depend on the choice of the algebraic model $C$. In \cite{Koizumi}, S. Koizumi proved that ${\mathcal M}_{S}$ coincides with the intersection of all fields of definition of $S$, but in general it might not be a field of definition \cite{Earle1, Earle2, Hid, Huggins, Kontogeorgis}. If ${\rm Aut}(S)$ is trivial (the generic situation for $g \geq 3$), then Weil's descent theorem \cite{Weil} asserts that ${\mathcal M}_{S}$ is a field of definition of $S$.  In \cite{W}, J. Wolfart proved that if $S/{\rm Aut}(S)$ is the Riemann sphere with exactly $3$ cone points (i.e., $S$ is quasiplatonic), then ${\mathcal M}_{S}$ is also a field of definition of $S$. In a more general setting, if $S/{\rm Aut}(S)$ has genus zero, then it is known that $S$ is definable over an extension of degree at most two of ${\mathcal M}_{S}$ (see \cite{H:FOD/FOM} for a more general statement).

Now, let $S$ be a $p$-gonal Riemann surface of genus $g \geq 2$ and let $G=\langle \varphi \rangle<{\rm Aut}(S)$ be a $p$-gonal group. 
As previously noted, $S$ is either definable over ${\mathcal M}_{S}$ or over a suitable quadratic extension of it (but it might not be cyclically $p$-gonally definable over such a minimal field of definition). In the case that $G$ is not a unique $p$-gonal subgroup, in \cite{W1}, A. Wootton noted that 
$S$ can be cyclically $p$-gonally defined over an extension of degree at most $2$ of its field of moduli. In the case that $G$ is the unique $p$-gonal 
subgroup, the quotient group ${\rm Aut}(S)/G$ is called the {\it reduced group} of $S$. In \cite{Kontogeorgis}, A. Kontogeorgis  proved that if the reduced group is neither trivial or a cyclic group, then $S$ can always be defined over its field of moduli. So, a direct consequence of Theorem \ref{teo1} is the following.

\begin{coro}\label{coro:moduli}
Let $S$ be a cyclic $p$-gonal Riemann surface with a $p$-gonal group $G=\langle \varphi \rangle$.
\begin{enumerate}[leftmargin=*,align=left]
\item If $G$ is  not a normal subgroup of ${\rm Aut}(S)$, then $S$ is cyclically $p$-gonally definable over an extension of degree at most two of ${\mathcal M}_{S}$. 

\item If $G$ is a normal subgroup of ${\rm Aut}(S)$ and ${\rm Aut}(S)/G$ is different from the trivial group or a cyclic group, then $S$ is cyclically $p$-gonally definable over an extension of degree at most $2(p-1)$ of ${\mathcal M}_{S}$. Moreover, if $\varphi$ also is defined over ${\mathcal M}_{S}$, then the extension can be chosen to be of degree at most two.

\item If $G={\rm Aut}(S)$, then $S$ is cyclically $p$-gonally definable over an extension of degree at most $4(p-1)$ of its field of moduli. Moreover, if $\varphi$ also is defined over ${\mathcal M}_{S}$, then the extension can be chosen of degree at most $4$.
\end{enumerate}
\end{coro}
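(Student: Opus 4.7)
The plan is to combine Theorem \ref{teo1} with the known results about when a cyclic $p$-gonal Riemann surface is definable over (a small extension of) its field of moduli, and then to track the degrees of the resulting field extensions.

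For part (1), I would first observe that if $G$ is the unique $p$-gonal subgroup of ${\rm Aut}(S)$, then $G$ must be normal: for any $\sigma\in{\rm Aut}(S)$, the conjugate $\sigma G\sigma^{-1}$ is again a cyclic subgroup of order $p$ whose quotient $S/\sigma G\sigma^{-1}$ is biholomorphic to $S/G$ and hence has genus zero, so $\sigma G\sigma^{-1}$ is also a $p$-gonal subgroup; uniqueness forces $\sigma G\sigma^{-1}=G$. Contrapositively, if $G$ is not normal, then $G$ is not the unique $p$-gonal subgroup, and Wootton's result \cite{W1} recalled in Section \ref{Sec:FOM} directly yields that $S$ is cyclically $p$-gonally definable over an extension of degree at most two of ${\mathcal M}_{S}$.

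For part (2), the hypothesis that ${\rm Aut}(S)/G$ is neither trivial nor cyclic puts us in the setting of Kontogeorgis' theorem \cite{Kontogeorgis}, so ${\mathcal M}_{S}$ is itself a field of definition of $S$. Applying Theorem \ref{teo1}(1) with ${\mathbb K}={\mathcal M}_{S}$ produces a $p$-gonal field of definition which is an extension of ${\mathcal M}_{S}$ of degree at most $2(p-1)$. Under the stronger hypothesis that $\varphi$ is simultaneously defined with $S$ over ${\mathcal M}_{S}$, Theorem \ref{teo1}(2) instead bounds the extension degree by $2$. For part (3), the hypothesis $G={\rm Aut}(S)$ gives that $S/{\rm Aut}(S)$ has genus zero, so the result from \cite{H:FOD/FOM} recalled in Section \ref{Sec:FOM} supplies a field of definition ${\mathbb K}$ of $S$ with $[{\mathbb K}:{\mathcal M}_{S}]\leq 2$. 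Applying Theorem \ref{teo1}(1) with this ${\mathbb K}$ yields a $p$-gonal field of definition ${\mathbb F}$ with $[{\mathbb F}:{\mathbb K}]\leq 2(p-1)$, and the tower law gives $[{\mathbb F}:{\mathcal M}_{S}]\leq 4(p-1)$. If moreover $\varphi$ is defined over ${\mathcal M}_{S}$ (hence over ${\mathbb K}$), then passing to a model over ${\mathbb K}$ which carries both $S$ and $\varphi$ simultaneously and invoking Theorem \ref{teo1}(2) instead yields $[{\mathbb F}:{\mathbb K}]\leq 2$, so $[{\mathbb F}:{\mathcal M}_{S}]\leq 4$.

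The main technical point to be careful about is ensuring the simultaneity of the definitions of $\varphi$ and $S$ over ${\mathbb K}$ needed to invoke Theorem \ref{teo1}(2): the cited results of Kontogeorgis and of \cite{H:FOD/FOM} a priori supply only a model of $S$, not a simultaneous model for $(S,\varphi)$. The simultaneity should follow either by inspecting the proofs of those results or by a short descent argument using the Galois action once $\varphi$ is known to be individually definable over ${\mathcal M}_{S}$. This is the step requiring the most care; the remainder is a routine bookkeeping of extension degrees.
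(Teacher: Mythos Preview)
Your proposal is correct and follows exactly the route the paper intends: the paper presents the corollary as ``a direct consequence of Theorem \ref{teo1}'' after recalling Wootton's observation for the non-unique case, Kontogeorgis' result for the non-trivial non-cyclic reduced group, and the general degree-two bound from \cite{H:FOD/FOM}, and your argument assembles these ingredients in precisely this way. The technical caveat you flag about simultaneity of $S$ and $\varphi$ over the base field when invoking Theorem \ref{teo1}(2) is a fair point of care, but the paper does not elaborate on it either; otherwise your bookkeeping of extension degrees matches the paper's implicit reasoning.
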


As every hyperelliptic Riemann surface is definable over an extension of degree at most two of its field of moduli, 
Corollary \ref{coro1} asserts the following.

\begin{coro}
Every hyperelliptic Riemann surface is hyperelliptically definable over an extension of degree at most $4$ of its field of moduli. Moreover, if either (i) the genus is even or (ii) the genus is odd and the reduced group is not trivial, then the hyperelliptic Riemann surface is hyperelliptically defined over an extension of degree at most $2$ of its field of moduli. 
\end{coro}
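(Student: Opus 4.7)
The plan is to combine Corollary \ref{coro1} with the fact, recalled just before the statement, that every hyperelliptic Riemann surface $S$ is definable over an extension ${\mathbb K}$ of its field of moduli ${\mathcal M}_S$ with $[{\mathbb K}:{\mathcal M}_S]\leq 2$.

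For the unconditional bound (degree $\leq 4$), I would fix such a ${\mathbb K}$ and apply Corollary \ref{coro1} to it: this yields a hyperelliptic field of definition ${\mathbb F}$ of $S$ with $[{\mathbb F}:{\mathbb K}]\leq 2$, and the tower law then gives $[{\mathbb F}:{\mathcal M}_S]\leq 4$.

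For the refined bound (degree $\leq 2$) under hypothesis (i) or (ii), the second quadratic jump from ${\mathbb K}$ to ${\mathbb F}$ can be avoided by invoking the sharper descent results recalled in the introduction instead of Corollary \ref{coro1}. In case (i), Mestre's theorem asserts that for even genus every field of definition of $S$ is already a hyperelliptic field of definition, so ${\mathbb K}$ itself serves and one obtains degree at most $2$. In case (ii) with non-trivial, non-cyclic reduced group, Kontogeorgis' theorem shows that ${\mathcal M}_S$ is itself a field of definition, which Huggins' theorem then upgrades to a hyperelliptic field of definition (so degree $1$ suffices). In the remaining subcase of (ii), where the reduced group is non-trivial cyclic, the Lercier--Ritzenthaler--Sijsling theorem produces a hyperelliptic field of definition at most quadratic over any field of definition of $S$.

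The delicate point is precisely this last subcase: a naive composition would give $2\cdot 2=4$ rather than $2$ over ${\mathcal M}_S$. To squeeze out the stated bound one has to show that the LRS quadratic extension can be realized directly over ${\mathcal M}_S$, i.e.\ that for a hyperelliptic surface of odd genus with non-trivial cyclic reduced group, ${\mathcal M}_S$ is already a field of definition (so that LRS can be applied with ${\mathbb K}={\mathcal M}_S$). This is the hardest step of the argument, and I would approach it by running the Weil-descent cocycle analysis of Theorem \ref{teo1} starting from the ${\rm Gal}(\overline{\mathbb Q}/{\mathcal M}_S)$-action, exploiting that the hyperelliptic involution is central (hence characteristic) in ${\rm Aut}(S)$ and so is canonically defined over ${\mathcal M}_S$, in order to trivialize the relevant cohomological obstruction.
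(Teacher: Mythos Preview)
Your treatment of the degree $\leq 4$ bound and of case (i) is correct and matches the paper's intended argument, as does your handling of the non-cyclic subcase of (ii) via Kontogeorgis together with Huggins.

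The final subcase, odd genus with non-trivial \emph{cyclic} reduced group, contains a genuine error. You propose to close the gap by showing that ${\mathcal M}_{S}$ is already a field of definition in this situation, so that LRS can be applied with ${\mathbb K}={\mathcal M}_{S}$. But this claim is false: as the paper itself records immediately after the corollary, Huggins constructed hyperelliptic surfaces with non-trivial cyclic reduced group that are \emph{not} definable over their field of moduli. Consequently the Weil-descent cocycle you plan to analyze does not trivialize in general, and the proposed argument cannot be completed.

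The way out is that the Lercier--Ritzenthaler--Sijsling theorem is sharper than the paper's summary (``quadratic over a field of definition ${\mathbb K}$'') might lead you to believe. Their obstruction analysis is carried out directly relative to the field of moduli: for a hyperelliptic curve with tamely cyclic reduced automorphism group the obstruction to hyperelliptic descent lies in the $2$-torsion of the Brauer group of ${\mathcal M}_{S}$, and hence a hyperelliptic model exists over an extension of ${\mathcal M}_{S}$ of degree at most $2$. Invoking LRS in this form handles the remaining subcase outright, with no intermediate tower and no need to argue that ${\mathcal M}_{S}$ is a field of definition.
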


Examples of hyperelliptic Riemann surface with trivial reduced group which cannot be defined over their field of moduli were provided by C. J. Earle \cite{Earle1,Earle2} and G. Shimura \cite{Shimura}. The same type of examples, but with non-trivial cyclic reduced group, were provided by B. Huggins \cite{Huggins}.

\section{Proof of Theorem \ref{teo1}}
We assume the $p$-gonal Riemann surface $S$ to be provided by an irreducible curve $C$, defined over 
a subfield ${\mathbb K}$ of ${\mathbb C}$.
 If $\overline{{\mathbb K}}$ is the algebraic closure of ${\mathbb K}$ inside ${\mathbb C}$, then (in this algebraic model)
the $p$-gonal automorphism $\varphi$ is given by a rational map defined over $\overline{\mathbb K}$. We divide the arguments depending on the uniqueness of the cyclic group $\langle \varphi \rangle$.

\subsection{The case when  $\langle \varphi \rangle$ is not unique}\label{auto}
The following result, due to A. Wootton, describes those cases were the uniqueness fails.

\begin{theo}[A. Wootton \cite{W1}]\label{Wootton1}
Let $S$ be a cyclic $p$-gonal Riemann surface of genus $g \geq 2$ and let $m=2(g+p-1)/(p-1)$. If $(m,p)$ is different from any the following tuples
$$(i) \; (3,7), \; (ii) \; (4,3), \; (iii)\; (4,5), \;(iv) \; (5,3), \; (v) (p,p),\; p \geq 5, \; (vi) \; (2p,p),\; p \geq 3,$$
then $S$ has a unique $p$-gonal group. 
\end{theo}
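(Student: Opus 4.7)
The plan is to assume uniqueness fails and show that $(m,p)$ must lie in the given list. Suppose there exist two distinct $p$-gonal groups $C_{1}=\langle \varphi_{1}\rangle$ and $C_{2}=\langle \varphi_{2}\rangle$ in ${\rm Aut}(S)$. Since both are cyclic of prime order $p$, necessarily $C_{1}\cap C_{2}=\{1\}$, so the two degree-$p$ covers $\pi_{i}:S\to S/C_{i}\cong \widehat{\mathbb C}$ are independent in the sense of Castelnuovo--Severi. Applying that inequality yields $g\leq (p-1)^{2}$, which combined with the Riemann--Hurwitz identity $g=(m-2)(p-1)/2$ (coming from $\pi_{1}$) forces $3\leq m\leq 2p$, reducing the problem to finitely many pairs.

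Next I would analyze the finite subgroup $H=\langle C_{1},C_{2}\rangle\leq {\rm Aut}(S)$. Because $S/H$ is a further quotient of $S/C_{1}\cong \widehat{\mathbb C}$ by a finite group, it has genus zero; writing its signature as $(0;m_{1},\dots,m_{r})$, Riemann--Hurwitz for $S\to S/H$ reads
\[
2g-2=|H|\Bigl(-2+\sum_{i=1}^{r}(1-1/m_{i})\Bigr).
\]
The group $H$ contains two distinct subgroups of order $p$, and by the Gonz\'alez-Diez theorem quoted in the introduction, $C_{1}$ and $C_{2}$ are conjugate in ${\rm Aut}(S)$; enlarging $H$ inside the normal closure if needed, we may take them to be conjugate already in $H$. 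Moreover $N_{H}(C_{1})/C_{1}$ descends to a finite subgroup of ${\rm Aut}(S/C_{1})={\rm PGL}_{2}(\mathbb C)$ permuting the $m$ branch values of $\pi_{1}$, and hence belongs to the classical list: cyclic, dihedral, $A_{4}$, $S_{4}$, or $A_{5}$. These four ingredients---Riemann--Hurwitz on $S/H$, the fixed signature $(0;p,\dots,p)$ with $m$ entries on $S/C_{1}$, the existence of several conjugate $p$-subgroups inside $H$, and the ${\rm PGL}_{2}$ classification---constrain $H$ severely.

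What remains is a finite case analysis for $3\leq m\leq 2p$. For each such pair one enumerates the orbifold signatures of $S/H$ compatible with Riemann--Hurwitz, and then checks whether the resulting group $H$ can actually host two distinct conjugate cyclic subgroups of order $p$ each having a genus-zero quotient; whenever this fails, $S$ has a unique $p$-gonal group. The main obstacle is precisely this enumeration: although the Castelnuovo bound leaves only finitely many candidate $(m,p)$, the arithmetic of Riemann--Hurwitz for $S/H$ combined with the rigid list of finite M\"obius groups must be pushed carefully, pair by pair, until either a contradiction forces uniqueness or one lands on one of the six listed exceptional tuples. Sharpness (that the listed tuples really admit non-unique $p$-gonal groups) can be verified via explicit models---Fermat curves $x^{p}+y^{p}+z^{p}=0$ for $(m,p)=(2p,p)$ and classical quasi-platonic examples for the sporadic entries---but this verification is not logically required for the implication that is actually being asserted.
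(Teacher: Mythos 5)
First, a point of comparison: the paper does not prove this statement at all. It is quoted verbatim as a known result of A.~Wootton \cite{W1}, and the paper's Section~\ref{auto} only uses its conclusion (together with Wootton's explicit description of the exceptional surfaces). So there is no in-paper argument to measure your proposal against; it has to stand on its own as a reconstruction of Wootton's theorem.

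As such, it has a genuine gap. The tools you assemble are the right ones and do appear in the literature on this problem (Castelnuovo--Severi giving $g \leq (p-1)^{2}$ and hence $m \leq 2p$; Riemann--Hurwitz for $S \to S/H$; Gonz\'alez-Diez's conjugacy of $p$-gonal groups; the classification of finite subgroups of ${\rm PGL}_{2}(\mathbb{C})$ acting on the branch set). But the entire content of the theorem --- that the exceptional tuples are \emph{exactly} $(3,7)$, $(4,3)$, $(4,5)$, $(5,3)$, $(p,p)$, $(2p,p)$ and nothing else --- lives in the enumeration you defer as ``the main obstacle.'' Nothing in your outline explains, for instance, why $(3,7)$ occurs but $(3,13)$ does not, or why $(4,5)$ occurs but $(4,7)$ does not; producing that list is the theorem, and you have only named the constraints that would go into deriving it. Moreover, the reduction you describe is not to ``finitely many pairs'': the bound $3 \leq m \leq 2p$ leaves infinitely many pairs as $p$ ranges over the primes, as the infinite exceptional families $(p,p)$ and $(2p,p)$ already show, so the case analysis must be carried out uniformly in $p$ rather than ``pair by pair.'' Two smaller imprecisions: $S/H$ has genus zero because there is a nonconstant holomorphic map $S/C_{1} \to S/H$ (as $C_{1} \leq H$), not because $S/H$ is literally a group quotient of $S/C_{1}$ (that would need $C_{1}$ normal in $H$, which is exactly what fails in the non-unique case); and the Castelnuovo--Severi application needs the brief remark that, $p$ being prime and $C_{1} \neq C_{2}$, the map $(\pi_{1},\pi_{2})$ is birational onto its image. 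In short: correct scaffolding, but the proof itself is not there; for the purposes of this paper the honest move is what the author does, namely cite \cite{W1}.
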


In the same paper, Wootton describes the exceptional cyclic $p$-gonal Riemann surfaces, ie., where the $p$-gonal group is non-unique.

\begin{enumerate}[leftmargin=*,align=left]
\item Case $(m,p)=(3,7)$ corresponds to Klein's quartic (a non-hyperelliptic Riemann surface of genus $3$) $x^{3}y+y^{3}z+z^{3}x=0$, whose group of automorphisms is ${\rm PGL}_{2}(7)$ (of order $168$). This surface is cyclically $7$-gonally defined as $y^{7}=x^{2}(x-z)z^{4}$.

\item Case $(m,p)=(4,3)$ corresponds to the genus $2$ Riemann surface defined hyperelliptically by $y^{2}z^{3}=x(x^{4}-z^{4})$, whose group of automorphisms is ${\rm GL}_{2}(3)$ (of order $48$). This surface is cyclically $3$-gonally defined as $y^{3}z^{3}=(x^{2}-z^{2})(x^{2}-(15\sqrt{3}-26)z^{2})^2$.

\item Case $(m,p)=(4,5)$ corresponds to the genus $4$ non-hyperelliptic Riemann surface, called Bring's curve, which is the complete intersection of the quadric $x_{1}x_{4}+x_{2}x_{3}=0$ and the cubic $x_{1}^{2}x_{3}+x_{2}^{2}x_{1}+x_{3}^{2}x_{4}+x_{4}^{2}x_{2}=0$ in the $3$-dimensional complex projective space. Its group of automorphisms is  ${\mathfrak S}_{5}$, the symmetric group in five letters ${\mathfrak S}_{5}$. This surface is cyclically $5$-gonally defined as $y^{5}z^{5}=(x^{2}-z^{2})(x^{2}+z^{2})^{4}$.

\item Case $(m,p)=(5,3)$ corresponds to the genus $3$ non-hyperelliptic closed Riemann surface $x^{4}+y^{4}+z^{4}+2i\sqrt{3}z^{2}y^{2}=0$, whose group of automorphisms has order $48$. The quotient of that surface by its group of automorphisms has signature $(0;2,3,12)$. This surface is cyclically $3$-gonally defined as $y^{3}z^{3}=x^{2}(x^{4}-z^{4})$.

\item Case $(m,p)=(p,p)$, where $p \geq 5$, corresponds to the Fermat curve $x^{p}+y^{p}+z^{p}=0$, whose group of automorphisms is ${\mathbb Z}_{p}^{2} \rtimes {\mathfrak S}_{3}$. This is already in a $p$-gonal form as $y^{p}=-z^{p}-x^{p}$.

\item Case $(m,p)=(2p,p)$, where $p \geq 3$. There is a $1$-dimensional family with group of automorphisms ${\mathbb Z}_{p}^{2} \rtimes {\mathbb Z}_{2}^{2}$ (the quotient by that group has signature $(0;2,2,2,p)$). Also, there is a surface with group of automorphisms ${\mathbb Z}_{p}^{2} \rtimes D_{4}$ (the quotient by that group has signature $(0;2,4,2p)$.
These surfaces are cyclically $p$-gonally defined as $y^{p}z^{p}=(x^{p}-a^{p}z^{p})(x^{p}-z^{p}/a^{p})=x^{2p}-(a^{p}+1/a^{p})x^{p}z^{p}+z^{2p}$.
\end{enumerate}

\medskip

Note that, in all the above exceptional cases, the surface $S$ is cyclically $p$-gonally defined over an extension of degree at most $2$ over the field of moduli. In fact, with only the exception of case (2), $S$ is cyclically $p$-gonally defined over its field of moduli. So, we are done in this situation.

\subsection{The case when  $\langle \varphi \rangle$ is unique}
We now assume that  $\langle \varphi \rangle$ is unique. Set $\Gamma={\rm Gal}(\overline{{\mathbb K}}/{\mathbb K})$. 
Let us consider a rational map $\pi:C \to {\mathbb P}_{\overline{{\mathbb K}}}^{1}$, defined over $\overline{\mathbb K}$, which is 
a regular branched covering with $\langle \varphi \rangle$ as its deck group and whose branch values are $a_{1},..., a_{m} \in {\mathbb C}$ (in fact, these values belong to $\overline{\mathbb K}$).
Let the integers $n_{1},...,n_{m} \in \{1,...,p-1\}$, $n_{1}+\cdots+n_{m} \equiv 0 \; {\rm mod \;} p$, such that $C$ is isomorphic to a $p$-gonal curve $E$ with equation \eqref{eq1}.

\subsubsection{Proof of Part(1)}
Let us recall that $\varphi$ is already defined over $\overline{\mathbb K}$. We start by observing that in fact it is defined over an extension of ${\mathbb K}$ of degree at most $p-1$.

\begin{claim}
The rational map $\varphi$ is defined over an extension ${\mathbb K}_{1}$ of ${\mathbb K}$ of degree at most $p-1$.
\end{claim}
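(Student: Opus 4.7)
The plan is to exploit the uniqueness of $\langle\varphi\rangle$ to obtain a character on $\Gamma={\rm Gal}(\overline{\mathbb K}/{\mathbb K})$ with values in $({\mathbb Z}/p{\mathbb Z})^{*}$, and then apply Galois descent. Fix $\sigma\in\Gamma$. Since $C$ is defined over ${\mathbb K}$, the conjugate $\varphi^{\sigma}$ obtained by applying $\sigma$ to the coefficients of the rational map $\varphi$ is again a regular self-map of $C$, of the same order $p$ (Galois action commutes with composition). Topologically $\varphi^{\sigma}$ has the same branching data as $\varphi$, so the quotient $S/\langle\varphi^{\sigma}\rangle$ again has genus zero; thus $\langle\varphi^{\sigma}\rangle$ is a $p$-gonal group of $S$. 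By the uniqueness hypothesis, $\langle\varphi^{\sigma}\rangle=\langle\varphi\rangle$, and there is a unique $k(\sigma)\in({\mathbb Z}/p{\mathbb Z})^{*}$ with $\varphi^{\sigma}=\varphi^{k(\sigma)}$.

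Next I would verify that $\chi\colon\sigma\mapsto k(\sigma)$ is a continuous group homomorphism $\Gamma\to({\mathbb Z}/p{\mathbb Z})^{*}$. Using $(\varphi^{n})^{\sigma}=(\varphi^{\sigma})^{n}$ (again because $\sigma$ acts as a ring automorphism, commuting with composition of rational maps), one has
$$\varphi^{k(\sigma\tau)}=\varphi^{\sigma\tau}=(\varphi^{\sigma})^{\tau}=(\varphi^{k(\sigma)})^{\tau}=(\varphi^{\tau})^{k(\sigma)}=\varphi^{k(\sigma)k(\tau)},$$
whence $k(\sigma\tau)\equiv k(\sigma)k(\tau)\pmod{p}$. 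Continuity in the Krull topology is automatic, because the finitely many coefficients of $\varphi$ generate a finite extension of ${\mathbb K}$ and $\chi$ factors through the corresponding finite Galois group.

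Finally, let $H=\ker\chi$ and ${\mathbb K}_{1}=\overline{\mathbb K}^{H}$. By the Galois correspondence, $[{\mathbb K}_{1}:{\mathbb K}]=|{\rm image}(\chi)|$, which divides $|({\mathbb Z}/p{\mathbb Z})^{*}|=p-1$. Every $\sigma\in H$ satisfies $\varphi^{\sigma}=\varphi$, so the rational map $\varphi$ is defined over ${\mathbb K}_{1}$, proving the claim.

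The substantive step is the first one: identifying $\varphi^{\sigma}$ as an element of $\langle\varphi\rangle$. Once that is in hand, the cocycle condition collapses to a homomorphism condition and the degree bound is immediate from $|({\mathbb Z}/p{\mathbb Z})^{*}|=p-1$; no real difficulty is expected.
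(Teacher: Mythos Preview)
Your argument is correct and follows essentially the same route as the paper: both use uniqueness of $\langle\varphi\rangle$ to conclude $\varphi^{\sigma}\in\{\varphi,\ldots,\varphi^{p-1}\}$, take ${\mathbb K}_{1}$ to be the fixed field of the stabilizer $\{\sigma:\varphi^{\sigma}=\varphi\}$, and bound its index by $p-1$. The only difference is that you make explicit the character $\chi:\Gamma\to({\mathbb Z}/p{\mathbb Z})^{*}$, which yields the marginally sharper conclusion that $[{\mathbb K}_{1}:{\mathbb K}]$ \emph{divides} $p-1$ rather than merely being at most $p-1$; the paper just invokes orbit--stabilizer on the set $\Omega$ of nontrivial powers.
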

\begin{proof}
If $\sigma \in \Gamma$, then $\varphi^{\sigma}$ is an automorphism of order $p$ of $C^{\sigma}=C$. As we are assuming the uniqueness of $\langle \varphi \rangle$, we must have that $\varphi^{\sigma} \in \Omega:=\{\varphi, \varphi^{2}, \ldots, \varphi^{p-1}\}$. In particular, the subgroup $A$ of $\Gamma$ consisting of those $\sigma$ such that $\varphi^{\sigma}=\varphi$ must have index at most the cardinality of the set $\Omega$, which is $p-1$. This asserts that $\varphi$ is defined over the fixed field ${\mathbb K}_{1}$ of $A$, which is an extension of degree at most $p-1$ of 
${\mathbb K}$.
\end{proof}

Set $\Gamma_{1}={\rm Gal}(\overline{{\mathbb K}}/{\mathbb K}_{1})$.
If $\tau \in \Gamma_{1}$, then (as the identity $I:C \to C=C^{\tau}$ conjugates $\langle \varphi \rangle=\langle \varphi \rangle^{\tau}=\langle \varphi^{\tau}\rangle$ to itself), there is a (unique) automorphism $g_{\tau}$ of 
${\mathbb P}_{\overline{{\mathbb K}}}^{1}$ such that $\pi^{\tau}=\pi^{\tau} \circ I=g_{\tau} \circ \pi$ (see the following diagram).

$$\begin{CD}
C @>I>> C=C^{\tau}\\ 
@V{\pi}VV @V{\pi^{\tau}}VV\\ 
{\mathbb P}_{\overline{{\mathbb K}}}^{1} @>g_{\tau}>>{\mathbb P}_{\overline{{\mathbb K}}}^{1} 
\end{CD}
$$

\medskip

As the group of automorphisms of ${\mathbb P}_{\overline{{\mathbb K}}}^{1}$ is given by M\"obius transformations (i.e., elements of ${\rm PGL}_{2}(\overline{{\mathbb K}})$), we must have 
$g_{\tau} \in {\rm PGL}_{2}(\overline{{\mathbb K}})$.

We may apply each $\sigma \in \Gamma_{1}$ to the above diagram to obtain the following one

$$\begin{CD}
C^{\sigma}=C @>I>> C=C^{\sigma\tau}\\ 
@V{\pi^{\sigma}}VV @V{\pi^{\sigma\tau}}VV\\ 
{\mathbb P}_{\overline{{\mathbb K}}}^{1} @>g_{\tau}^{\sigma}>>{\mathbb P}_{\overline{{\mathbb K}}}^{1} 
\end{CD}
$$

The above permits to obtain the following diagram

$$\begin{CD}
C @>I>> C=C^{\sigma}@>I=I^{\sigma}>>C=C^{\sigma \tau}\\ 
@V{\pi}VV @V{\pi^{\sigma}}VV @V{\pi^{\sigma \tau}}VV\\ {\mathbb P}_{\overline{{\mathbb K}}}^{1} @>g_{\sigma}>>{\mathbb P}_{\overline{{\mathbb K}}}^{1}@>g_{\tau}^{\sigma}>> {\mathbb P}_{\overline{{\mathbb K}}}^{1} 
\end{CD}
$$

\medskip

As the transformation $g_{\rho}$ is uniquely determined by $\rho \in \Gamma_{1}$, 
the collection $\{g_{\rho}\}_{\rho \in \Gamma_{1}}$ satisfies the co-cycle relation 
$$g_{\sigma \tau}=g_{\tau}^{\sigma} \circ g_{\sigma}, \quad \sigma, \tau \in \Gamma_{1}.$$

\medskip

Weil's descent theorem \cite{Weil} ensures the existence of a genus zero irreducible and non-singular algebraic curve $B$, defined over ${\mathbb K}_{1}$, and an isomorphism $R:{\mathbb P}_{\overline{{\mathbb K}}}^{1} \to B$, defined over $\overline{{\mathbb K}}$, so that $$g_{\sigma} \circ R^{\sigma} = R, \quad \sigma \in \Gamma_{1}.$$

Also, as for $\sigma \in \Gamma_{1}$, we have $\{\sigma(a_{1}),..., \sigma(a_{m})\}=\{g_{\sigma}(a_{1}),..., g_{\sigma}(a_{m})\}$, it follows that $\{R(a_{1}),...,R(a_{m})\}$ is $\Gamma_{1}$-invariant. 

\medskip

Let us denote by $A(n_{j})$ the set of those $a_{k}$'s for which $n_{k}=n_{j}$. 

\begin{claim}\label{claim2}
Each set $R(A(n_{j}))$ is $\Gamma_{1}$-invariant. 
\end{claim}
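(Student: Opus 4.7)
The plan is to reduce the claim to the assertion that, for each $\sigma \in \Gamma_{1}$, the permutation of indices induced on the $R(a_{j})$'s preserves the exponent labels $n_{j}$. From the paragraph just before the claim one already knows that $\{R(a_{1}),\ldots,R(a_{m})\}$ is $\Gamma_{1}$-invariant, so every $\sigma \in \Gamma_{1}$ determines a unique permutation $\tau_{\sigma}$ of $\{1,\ldots,m\}$ with $\sigma(R(a_{k})) = R(a_{\tau_{\sigma}(k)})$. Unwinding the descent relation $g_{\sigma} \circ R^{\sigma} = R$ together with the set identity $\{\sigma(a_{j})\} = \{g_{\sigma}(a_{j})\}$, this $\tau_{\sigma}$ is precisely the permutation characterized on the branch values by $\sigma(a_{k}) = g_{\sigma}(a_{\tau_{\sigma}(k)})$, and the claim is therefore equivalent to the arithmetic statement $n_{\tau_{\sigma}(k)} = n_{k}$ for all $k$ and all $\sigma \in \Gamma_{1}$.

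To establish this equality I would compare two natural $p$-gonal presentations of the cover $\pi^{\sigma}$. On the one hand, since the Galois conjugate of the equation $y^{p} = F(x)$ is $y^{p} = F^{\sigma}(x) = \prod_{j}(x - \sigma(a_{j}))^{n_{j}}$, the branch value $\sigma(a_{k})$ of $\pi^{\sigma}$ inherits the exponent $n_{k}$ from the Galois-conjugated equation. On the other hand, from $\pi^{\sigma} = g_{\sigma} \circ \pi$ one substitutes $x' = g_{\sigma}(x)$ in $y^{p} = F(x)$; the rational factor that arises from $g_{\sigma}^{-1}$ has total exponent $\sum_{j} n_{j}$, which by hypothesis is a multiple of $p$, so it can be absorbed as a $p$-th power into a rescaled coordinate $y' = y \cdot q(x')$, producing the equivalent $p$-gonal equation $(y')^{p} = \prod_{j}(x' - g_{\sigma}(a_{j}))^{n_{j}}$ in which the branch value $g_{\sigma}(a_{\tau_{\sigma}(k)})$ carries exponent $n_{\tau_{\sigma}(k)}$. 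Because both equations present the same cover at the common branch point $\sigma(a_{k}) = g_{\sigma}(a_{\tau_{\sigma}(k)})$, the exponents read from the two presentations must agree, yielding $n_{k} = n_{\tau_{\sigma}(k)}$.

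The delicate point, which I expect to be the main obstacle, is to ensure that the two $p$-gonal presentations of $\pi^{\sigma}$ really share the same cyclic deck generator $(x,y) \mapsto (x, \omega_{p} y)$, so that the exponents extracted from each presentation are measured against the same element of $\langle\varphi\rangle$ and can legitimately be equated. This is where the defining property $\varphi^{\sigma} = \varphi$ of $\Gamma_{1}$ is essential: it is precisely the condition that makes the Galois-conjugated equation $y^{p} = F^{\sigma}(x)$ carry the same deck action as the substituted equation $(y')^{p} = \prod_{j}(x' - g_{\sigma}(a_{j}))^{n_{j}}$. Once this compatibility is in place, the equation-level matching of exponents gives $n_{k} = n_{\tau_{\sigma}(k)}$, and hence $\sigma(R(a_{k})) = R(a_{\tau_{\sigma}(k)}) \in R(A(n_{j}))$ whenever $a_{k} \in A(n_{j})$, proving the $\Gamma_{1}$-invariance of $R(A(n_{j}))$.
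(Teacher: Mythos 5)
Your argument is the same one the paper gives, just written out in more detail: present $\pi^{\sigma}$ once as the Galois conjugate of the $p$-gonal equation and once as $g_{\sigma}\circ\pi$, and match exponents at the common branch value $\sigma(a_{k})=g_{\sigma}(a_{\tau_{\sigma}(k)})$ using $\varphi^{\sigma}=\varphi$. The reduction to $n_{\tau_{\sigma}(k)}=n_{k}$ and the identification of $\tau_{\sigma}$ through the descent relation are correct. But the step you yourself flag as delicate is exactly where the argument is incomplete, and the compatibility you assert does not follow from $\varphi^{\sigma}=\varphi$ alone. The exponent attached to a branch value is not intrinsic to the cover: it is read off against the distinguished generator $(x,y)\mapsto(x,\omega_{p}y)$ of the particular equation, and replacing that generator by its $s$-th power multiplies every exponent by $s^{-1}\bmod p$. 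In your first presentation, the map $(x,y)\mapsto(x,\omega_{p}y)$ on $y^{p}=F^{\sigma}(x)$ is $(\varphi^{\sigma})^{t_{\sigma}^{-1}}=\varphi^{t_{\sigma}^{-1}}$, where $t_{\sigma}$ is defined by $\sigma(\omega_{p})=\omega_{p}^{t_{\sigma}}$; in the second presentation the distinguished generator is $\varphi$ itself. So $\varphi^{\sigma}=\varphi$ makes the two deck \emph{groups} coincide but not the two distinguished \emph{generators}, and the comparison actually yields $n_{\tau_{\sigma}(k)}\equiv t_{\sigma}^{-1}n_{k}\ ({\rm mod}\ p)$ rather than $n_{\tau_{\sigma}(k)}=n_{k}$. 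Equivalently: $d\varphi$ acts at the fixed point over $a_{k}$ by $\omega_{p}^{n_{k}^{-1}}$, and since $d(\varphi^{\sigma})_{P^{\sigma}}=\sigma(d\varphi_{P})$, the hypothesis $\varphi^{\sigma}=\varphi$ only gives that the fixed point over $g_{\sigma}^{-1}(\sigma(a_{k}))$ has rotation number $t_{\sigma}n_{k}^{-1}$.

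The missing ingredient is $\sigma(\omega_{p})=\omega_{p}$ for all $\sigma\in\Gamma_{1}$, i.e.\ $\omega_{p}\in{\mathbb K}_{1}$, which is not guaranteed by the definition of ${\mathbb K}_{1}$ and is not vacuous: for $p=3$ consider $y^{3}=f(x)\overline{f}(x)^{2}$ with $\deg f=4$ and $f,\overline{f}$ coprime (so $m=8$, $g=6$, and the trigonal group is unique). The antiholomorphic map $\tau(x,y)=(\bar{x},f(\bar{x})\overline{f}(\bar{x})/\bar{y})$ is an involution commuting with $\varphi$, so the pair $(C,\varphi)$ descends to ${\mathbb R}$; yet the induced conjugation on ${\mathbb P}^{1}$ swaps the roots of $f$ (exponent $1$) with those of $\overline{f}$ (exponent $2$), so $R(A(1))$ is not $\Gamma_{1}$-invariant. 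To be fair, the paper's own two-line proof makes the identical assertion (``As $\varphi^{\sigma}=\varphi$, we must have $n_{l}=n_{j}$'') with no further justification, so you have faithfully reproduced the intended argument; but since you explicitly identified this as the crux and claimed to resolve it, you should note that closing it requires enlarging ${\mathbb K}_{1}$ to ${\mathbb K}_{1}(\omega_{p})$ (or an extra hypothesis on the multiset $\{n_{j}\}$), which in turn affects the degree bounds in Theorem \ref{teo1}.
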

\begin{proof}
If $\sigma \in \Gamma_{1}$, then (as $\pi^{\sigma}=g_{\sigma} \circ \pi$) the set $g_{\sigma}(A(n_{j}))$ corresponds to the set of those $\sigma(a_{k})$ having the same $n_{l}$ (for some $l$), that is, $g_{\sigma}(A(n_{j}))=\sigma(A(n_{l}))$.
As $\varphi^{\sigma}=\varphi$, we must have $n_{l}=n_{j}$, that is, $g_{\sigma}(A(n_{j}))=\sigma(A(n_{j}))$. This last equality implies the desired claim.
\end{proof}

\begin{claim}\label{claim3}
There is an effective ${\mathbb K}_{1}$-rational divisor $U \geq 0$ of degree at most two in $B$.
\end{claim}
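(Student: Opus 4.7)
The plan is to exploit the fact that $B$ is a smooth projective curve of genus zero defined over $\mathbb{K}_1$, without using any further data from the branch locus. Since $B$ is smooth over $\mathbb{K}_1$, its canonical divisor class $K_B$ is intrinsically $\mathbb{K}_1$-rational, so the anticanonical class $[-K_B]$ is a $\mathbb{K}_1$-rational divisor class on $B$ of degree $2$. I would then take $U$ to be any effective $\mathbb{K}_1$-rational representative of $[-K_B]$.

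To produce such a representative concretely, I would invoke Riemann--Roch on $B$ over $\mathbb{K}_1$: since $g(B)=0$ and $\deg(-K_B)=2$, one computes $h^0(B,\mathcal{O}_B(-K_B))=3$. Hence the complete linear system $|-K_B|$ is parametrised by a $\mathbb{P}^2_{\mathbb{K}_1}$, and any $\mathbb{K}_1$-rational point of it gives an effective $\mathbb{K}_1$-rational divisor of degree exactly $2$. Equivalently, the complete anticanonical map embeds $B$ as a smooth plane conic in $\mathbb{P}^2_{\mathbb{K}_1}$ defined over $\mathbb{K}_1$, and intersecting this conic with any line with $\mathbb{K}_1$-rational equation yields the required $U$.

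A sharper conclusion is available in favourable cases: if $B$ admits a $\mathbb{K}_1$-rational point (for instance, whenever some stratum $R(A(n_j))$ identified in Claim \ref{claim2} is a singleton, since that point is then $\Gamma_1$-fixed and hence $\mathbb{K}_1$-rational), one may take $U$ of degree $1$. This refinement is not needed for the claim as stated but will matter when producing the sharper field-extension bounds in Theorem \ref{teo1}(3).

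I do not anticipate a genuine obstacle here; the only mild subtlety is that if $B$ is a non-split conic over $\mathbb{K}_1$ then $U$ is not a pair of $\mathbb{K}_1$-rational points but a single Galois orbit defined over a quadratic extension of $\mathbb{K}_1$. This is exactly what the next step of the proof of Theorem \ref{teo1}(1) needs: via $U$ one can identify $B$ with $\mathbb{P}^1$ over an extension of $\mathbb{K}_1$ of degree at most two, over which the $p$-gonal equation \eqref{eq1} can be written, producing a $p$-gonal field of definition of degree at most $2(p-1)$ over $\mathbb{K}$.
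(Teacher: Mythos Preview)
Your proof is correct and is essentially the standard observation that a smooth genus zero curve over a field is a plane conic, hence carries an effective rational divisor of degree two (a hyperplane section). Both your argument and the paper's use the canonical class and Riemann--Roch, but there is one difference worth noting. The paper does not discard the branch data: it takes the $\mathbb{K}_{1}$-rational divisor $R(a_{1})+\cdots+R(a_{m})$ of degree $m$ and subtracts a suitable multiple of a $\mathbb{K}_{1}$-rational canonical divisor to reach degree $1$ or $2$, then applies Riemann--Roch to extract an effective representative. In particular, when $m$ is odd the paper lands on a degree one divisor, hence a $\mathbb{K}_{1}$-rational point, and no quadratic extension is needed at all. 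Your anticanonical construction always produces degree two, so it may incur the quadratic extension even when $m$ is odd. This refinement is invisible in the stated bound $2(p-1)$ of Theorem~\ref{teo1}(1), so your argument suffices for the claim and the theorem as written; but it is a small loss of sharpness.

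One tangential remark: your comment linking singleton strata $R(A(n_{j}))$ to Part~(3) of Theorem~\ref{teo1} is off target. Part~(3) assumes $n_{1}=\cdots=n_{m}$, so there is a single stratum of size $m$, not a singleton; the paper obtains the sharper bound there by observing that Claim~\ref{claim2} becomes unnecessary, allowing the whole argument to run over $\mathbb{K}$ rather than $\mathbb{K}_{1}$. This does not affect the correctness of your proof of Claim~\ref{claim3}.
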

\begin{proof}
We follows similar techniques as used by Huggings in her Thesis \cite{Huggins} (and other authors). Let us consider 
any ${\mathbb K}_{1}$-rational meromorphic $1$-form $\omega$ in $B$. Since $B$ has genus zero, the canonical divisor $K=(\omega)$ is a ${\mathbb K}_{1}$-rational of degree $-2$. In this way, there is a positive integer $d$ such that the divisor $D=R(a_{1})+ \cdots +R(a_{m})+d K$ is ${\mathbb K}_{1}$-rational of degree $1$ or $2$. If $D \geq 0$, then we set $U:=D$. 

Let us assume $D$ is not effective. Let us consider the Riemann-Roch space $L(D)$, consisting of those non-constant rational maps $\phi:B \to {\mathbb P}_{\overline{{\mathbb K}}}^{1}$ whose divisors satisfy $(\phi)+D \geq 0$ together the constant ones.
As the divisor $D$ is ${\mathbb K}_{1}$-rational, for every $\sigma \in \Gamma_{1}$ and every $\phi \in L(D)$, it follows that $\phi^{\sigma} \in L(D)$. This, in particular, permits to observe that we can find a basis of $L(D)$ consisting of rational maps defined over ${\mathbb K}_{1}$. One of the elements of such a basis must be a non-zero constant map.
As, by Riemann-Roch's theorem, $L(D)$ has dimension $2$ (if $D$ has degree one) or $3$ (if $D$ has degree two), we may find a non-constant $f \in L(D)$ belonging to  such a basis (defined over ${\mathbb K}_{1}$). In this case, we may take  $U=(f)+D \geq 0$. 
\end{proof}

By Claim \ref{claim3}, there is an effective ${\mathbb K}_{1}$-rational divisor $U$ of degree $1$ or $2$ and $U \geq 0$. We have three possibilities:
\begin{enumerate}
\item $U=s$, where $s \in B$ is ${\mathbb K}_{1}$-rational; or
\item $U=2t$, where $t \in B$ is ${\mathbb K}_{1}$-rational; or
\item $U=r+q$, where $r,q \in B$, $r \neq q$, and $\{r,q\}$ is $\Gamma_{1}$-invariant.
\end{enumerate}

In cases (1) and (2) we have the existence of a ${\mathbb K}_{1}$-rational point in $B$. In this case, we set ${\mathbb K}_{2}={\mathbb K}_{1}$.
In case (3) we have a point (say $r$) in $B$ which is rational over a quadratic extension ${\mathbb K}_{2}$ of ${\mathbb K}_{1}$. 

\medskip

Let $b \in B$ be a  ${\mathbb K}_{2}$-rational point (whose existence is provided above). By Riemann-Roch's theorem, the Riemann-Roch space $L(b)$ (where $b$ is though as a dividor of degree one) has dimension $2$. Similarly as above, we may chose a basis $\{1,L\}$ of $L(b)$, each element defined over ${\mathbb K}_{2}$. In this case, $L:B \to \widehat{\mathbb C}$ turns out to be an isomorphism defined over ${\mathbb K}_{2}$. 

We have that $Q=L \circ R \circ \pi:C \to \widehat{\mathbb C}$ is a Galois (branched) covering with deck group $\langle \varphi \rangle$ and whose branch values are 
$\{L(R(a_{1})),...,L(R(a_{m}))\}$. It follows that  $S$ is $p$-gonally defined by
$$y^{p}=F(x)=\prod_{j=1}^{m}\left(x-L(R(a_{j}))\right)^{n_{j}}.$$

As the sets $\{L(R(a_{1})),...,L(R(a_{m}))\}$ and $L(R(A(n_{j})))$ are ${\rm Gal}(\overline{{\mathbb K}}/{\mathbb K}_{2})$-invariant (by Claim \ref{claim2} and the fact that ${\mathbb K}_{1}$ is a subfield of ${\mathbb K}_{2}$), it follows that $F(x)=\prod_{j=1}^{m}\left(x-L(R(a_{j}))\right)^{n_{j}}\in {\mathbb K}_{2}[x]$. As ${\mathbb K}_{2}$ is an extension of degree at most two of ${\mathbb K}_{1}$ and the last one is an extension of degree at most $p-1$ of ${\mathbb K}$, we are done.

\subsubsection{Proof of Parts (2) and (3)}
If $\varphi$ is already defined over ${\mathbb K}$ then we assume ${\mathbb K}_{1}={\mathbb K}$ (i.e., we set $\Gamma_{1}=\Gamma$) in the above arguments. Similarly, if in Equation \eqref{eq1} we have that $n_{1}=\cdots=n_{m}=n$, then there will be only one set $A(n)$. In this case, in the previous arguments we do not need to use Claim \ref{claim2} (where it was needed the choice of ${\mathbb K}_{1}$) and we may work as in the proof of Part (1) with ${\mathbb K}$ instead of ${\mathbb K}_{1}$.

 \subsection*{Acknowledgements} The author would like to thank both referees for their valuable com- ments, suggestions and corrections which helped to improve the paper.


\end{document}